\documentclass[12pt]{article}

\usepackage[margin=1in]{geometry}  
\usepackage{graphicx}              
\usepackage{amsmath}               
\usepackage{amsfonts}              
\usepackage{amsthm}                

\usepackage{authblk}
\usepackage{blindtext}
\usepackage{geometry,amssymb,graphicx,tensor,epigraph,amsmath,amsthm, xspace, listings, stmaryrd, float, tikz-cd, mathtools, afterpage}
\usepackage[T1]{fontenc}
\usepackage[utf8]{inputenc}
\usepackage[english]{babel}
\usepackage{bm}
\usepackage{hyperref}
\hypersetup{
    colorlinks=true,
    linkcolor=black,
    filecolor=magenta,
    urlcolor=blue,
    pdftitle={An Extension of Major-Minor Mean Field Game Theory to Joint State-Control Measures},
    pdfpagemode=FullScreen,
    }

\newtheorem{teo}{Theorem}[section]
\newtheorem{lema}[teo]{Lemma}
\newtheorem{prop}[teo]{Proposition}
\newtheorem{cor}[teo]{Corollary}

\newtheorem*{obs*}{Remark}

\newtheorem*{lema*}{Lemma}

\theoremstyle{definition}

\newcommand{\keywords}[1]{\textbf{\textit{Keywords---}} #1}

\newcommand{\agus}[1]{}

\begin{document}

\title{An Extension of Major-Minor Mean Field Game Theory}

\author[1]{Agust\'in Mu\~noz Gonz\'alez}

\affil[1]{Departamento de Matem\'aticas, Facultad de Ciencias Exactas y Naturales, Universidad de Buenos Aires, Buenos Aires, Argentina}

\maketitle


\begin{abstract}
This work extends the theory presented in \textit{Mean Field Games with a Dominating Player} by Bensoussan, Chau and Yam \cite{bensoussan2014mfgdominating} on mean field games with a dominating player, to the case in which the utility and cost functions depend not only on the law of the states, but on the joint state--control law. We incorporate the unified notation $\Pi_t = \mathcal{L}(X_t^1, U_t^1 \mid \mathcal{F}_t^0) \in \mathcal{P}_2(\mathbb{R}^{n_1} \times A)$, which describes the conditional distribution of the state--control pair of the representative agent given the common noise of the dominating player. In addition, we generalize the role of the dominating player to include the direct impact of its controls $u_0$ on the dynamics and functionals of the system. The optimization problems are reformulated in terms of $\Pi_t$, the necessary optimality conditions are established via stochastic maximum principles, and a coupled SHJB--FP system of equations is obtained that synthesizes the equilibrium conditions. This framework provides a significant extension of the existing literature on MFG with a dominating player.
\end{abstract}
\keywords{mean field games, dominating player, Lions derivative, joint state--control measure, SHJB--FP equations, stochastic maximum principle}
\hspace{10pt}
\clearpage
\tableofcontents
\clearpage
\section{Introduction}

In this section we extend the framework of \textit{mean field games with a dominating player} (major--minor mean field games), developed in \cite{bensoussan2014mfgdominating}, to the case in which the utility and cost functions depend not only on the law of the states, but on the joint state--control law. This extension allows us to capture phenomena where the interaction among representative agents is not described solely through their state trajectories, but also through the distributions induced by their control decisions.

In addition, we generalize the role of the dominating player: in the classical model, its influence on the system appears only through its state \(x_0\); here we also consider the direct impact of its controls \(u_0\). In this way, the extended framework encompasses both dependencies via the dominating state and dependencies via its actions, a crucial aspect in applications where the decisions of the major player determine the dynamics and rewards of the rest of the system.

We introduce the notation
\[
\Pi_t = \mathcal{L}\big(X_t^1, U_t^1 \,\big|\, \mathcal{F}_t^0\big)\in\mathcal{P}_2(\mathbb{R}^{n_1}\times A),
\]
which describes the conditional distribution of the pair \((x_1,u_1)\) of the representative agent given the common noise of the dominating player. As special cases, the law of the states and the law of the controls are obtained as marginals:
\[
\mu_t = (\mathrm{pr}_X)_{\#}\Pi_t,
\qquad
q_t = (\mathrm{pr}_U)_{\#}\Pi_t.
\]

The advantage of working with $\Pi_t$ is that it unifies both dependencies in a single measure on the product space, which simplifies both the notation and the application of calculus on the space of probability measures (Lions derivatives).

In the MFG literature it is customary for the mean field interaction to occur through the law of the states, $\mu$, and not through the law of the controls. Some authors, such as R. Carmona and D. Lacker, analyze this case by introducing the law of the controls into the cost function optimized by the agents \cite{carmona2015weakmfgformulation}. However, when a new player with dominating characteristics is introduced, the results found in the associated literature refer to the classical MFG setting. For example, in \cite{bensoussan2014mfgdominating} a game is analyzed with one dominating agent and many small agents, where the functionals depend only on the law of the states:

\begin{align*}
    J_0(u_0) &= \mathbb{E} \left[ \int_0^T f_0 \left( x_0(t), \mu(t), u_0(t) \right) dt + h_0 \left( x_0(T), \mu(T) \right) \right], \\
    J_1(u_1, x_0, \mu) &= \mathbb{E} \left[ \int_0^T f_1 \left( x_1(t), x_0(t), \mu(t), u_1(t) \right) dt + h_1 \left( x_1(T), x_0(T), \mu(T) \right) \right].
\end{align*}

In this work, we adapt the theory of \cite{bensoussan2014mfgdominating} to the case in which the mean field interactions occur through the joint state--control law $\Pi_t$.

The objectives of this work are:
\begin{enumerate}
    \item To reformulate the optimization problems of the representative agent and the dominating player in terms of $\Pi$ (Problems 1--3).
    \item To recall the necessary technical notions: diffusion operators and Fokker--Planck equations, as well as the Lions derivative for functions depending on joint measures.
    \item To extend the results on necessary optimality conditions to the state--control case, stating and proving the corresponding lemmas (analogues of Lemmas 24--26 in \cite{bensoussan2014mfgdominating}).
\end{enumerate}

The proofs follow the same strategy as in the classical case: variations of the controls are considered, the variations of the objective functions are expressed in terms of the measure $\Pi$ and the induced dynamics, and via duality adjoint processes are introduced that allow the minimizers to be characterized in terms of stochastic Hamilton--Jacobi--Bellman (SHJB) problems coupled with Fokker--Planck equations.

\clearpage

\section{Working spaces and assumptions}

We work on a filtered probability space
\[
(\Omega,\mathcal{F},\mathbb{P};\,\mathbb{F}^0=(\mathcal{F}^0_t)_{t\in[0,T]},\,\mathbb{F}^1=(\mathcal{F}^1_t)_{t\in[0,T]}),
\]
with two independent adapted Brownian motions $W^0$ and $W^1$ (of dimensions $d_0$ and $d_1$), and initial conditions $\xi_0\in \mathbb{R}^{n_0}$, $\xi_1\in \mathbb{R}^{n_1}$ independent of $(W^0,W^1)$.

Let \( x_0(t) \in \mathbb{R}^{n_0} \) and \( x_1(t) \in \mathbb{R}^{n_1} \) denote the state processes for the dominating player and a representative agent, respectively, whose dynamics are given by the following stochastic differential equations:

\[
\begin{aligned}
    dx_0 &= g_0 \left( x_0(t), \Pi_t, u_0(x_0(t), t) \right) dt + \sigma_0 \left( x_0(t) \right) dW_0(t), \\
    x_0(0) &= \xi_0, \\
    dx_1 &= g_1 \left( x_1(t), x_0(t), u_0(t), \Pi_t, u_1(x_1(t)) \right) dt + \sigma_1 \left( x_1(t) \right) dW_1(t), \\
    x_1(0) &= \xi_1.
\end{aligned}
\]

The functional coefficients are defined as follows:

\[
\begin{aligned}
    g_0 &: \mathbb{R}^{n_0} \times \mathcal{P}_2(\mathbb{R}^{n_1}\times A) \times \mathbb{R}^{m_0} \to \mathbb{R}^{n_0}, \\
    g_1 &: \mathbb{R}^{n_1} \times \mathbb{R}^{n_0} \times \mathbb{R}^{m_0} \times \mathcal{P}_2(\mathbb{R}^{n_1}\times A) \times \mathbb{R}^{m_1} \to \mathbb{R}^{n_1}, \\
    \sigma_0 &: \mathbb{R}^{n_0} \to \mathbb{R}^{n_0 \times d_0}, \\
    \sigma_1 &: \mathbb{R}^{n_1} \to \mathbb{R}^{n_1 \times d_1}.
\end{aligned}
\]

The dominating player and the representative agents also have the following objective functionals, respectively:
\begin{equation}
\label{eq:J0}
    J_0(u_0) = \mathbb{E} \left[ \int_0^T f_0 \left( x_0(t), \Pi_t, u_0(t) \right) dt + h_0 \left( x_0(T), \Pi_T \right) \right],
\end{equation}
\begin{equation}
\label{eq:J1}
    J_1(u_1, x_0, u_0, \Pi) = \mathbb{E} \left[ \int_0^T f_1 \left( x_1(t), x_0(t), u_0(t), \Pi_t, u_1(t) \right) dt + h_1 \left( x_1(T), x_0(T), u_0(T), \Pi_T \right) \right].
\end{equation}
The functions are defined as follows:

\[
\begin{aligned}
    f_0 &: \mathbb{R}^{n_0} \times \mathcal{P}_2(\mathbb{R}^{n_1}\times A) \times \mathbb{R}^{m_0} \to \mathbb{R}, \\
    f_1 &: \mathbb{R}^{n_1} \times \mathbb{R}^{n_0} \times \mathbb{R}^{m_0} \times \mathcal{P}_2(\mathbb{R}^{n_1}\times A) \times \mathbb{R}^{m_1} \to \mathbb{R}, \\
    h_0 &: \mathbb{R}^{n_0} \times \mathcal{P}_2(\mathbb{R}^{n_1}\times A) \to \mathbb{R}, \\
    h_1 &: \mathbb{R}^{n_1} \times \mathbb{R}^{n_0} \times \mathbb{R}^{m_0} \times \mathcal{P}_2(\mathbb{R}^{n_1}\times A) \to \mathbb{R}.
\end{aligned}
\]

Here, \( u_0 \in \mathbb{R}^{m_0} \) and \( u_1 \in \mathbb{R}^{m_1} \) represent the respective controls of the dominating player and the representative agents.

To guarantee the existence of optimal controls and the equilibrium of the game, we impose the following assumptions on the functions defining the state processes and payoffs. Unlike the original formulation of Bensoussan, Chau and Yam \cite{bensoussan2014mfgdominating}, where the functions depend on the law of the states $\mu_t\in\mathcal{P}(\mathbb{R}^{n_1})$, in our extension the functions depend on the joint state--control law $\Pi_t\in\mathcal{P}(\mathbb{R}^{n_1}\times A)$. The assumptions are adapted accordingly.

\begin{itemize}

\item (A.1) Lipschitz continuity:

The functions \( g_0 \), \( \sigma_0 \), \( g_1 \) and \( \sigma_1 \) are globally Lipschitz continuous in all their arguments. In particular, there exists a constant \( K > 0 \) such that:

\begin{align*}
|g_0(x_0, \Pi, u_0) - g_0(x_0', \Pi', u_0')| &\leq K \left( |x_0 - x_0'| + W_2(\Pi, \Pi') + |u_0 - u_0'| \right);\\
|\sigma_0(x_0) - \sigma_0(x_0')| &\leq K |x_0 - x_0'|;\\
|g_1(x_1, x_0, u_0, \Pi, u_1) - g_1(x_1', x_0', u_0', \Pi', u_1')| &\leq K \left( |x_1 - x_1'| + |x_0 - x_0'| + |u_0 - u_0'| \right) \\
& + K\left(W_2(\Pi, \Pi') + |u_1 - u_1'| \right);\\
|\sigma_1(x_1) - \sigma_1(x_1')| &\leq K |x_1 - x_1'|.
\end{align*}

\item (A.2) Linear growth:

The functions \( g_0 \), \( \sigma_0 \), \( g_1 \) and \( \sigma_1 \) grow at most linearly in all their arguments. In particular, there exists a constant \( K > 0 \) such that:

\begin{align*}
|g_0(x_0, \Pi, u_0)| &\leq K(1 + |x_0| + M_2(\Pi) + |u_0|);\\
|\sigma_0(x_0)| &\leq K(1 + |x_0|);\\
|g_1(x_1, x_0, u_0, \Pi, u_1)| &\leq K(1 + |x_0| + |x_1| + |u_0| + M_2(\Pi) + |u_1|);\\
|\sigma_1(x_1)| &\leq K(1 + |x_1|).
\end{align*}

\item (A.3) Quadratic growth condition on the cost function:

There exists a constant \( K > 0 \) such that:

\begin{align*}
|f_1(x_1, x_0, u_0, \Pi, u_1) - f_1(x_1', x_0', u_0', \Pi', u_1')| &\leq K \left( 1 + |x_1| + |x_1'| + |x_0| + |x_0'| + |u_0| + |u_0'| \right. \\
&+ \left. M_2(\Pi) + M_2(\Pi') + |u_1| + |u_1'| \right) \\
&\cdot \left( |x_1 - x_1'| + |x_0 - x_0'| + |u_0 - u_0'|\right. \\
&+\left. W_2(\Pi, \Pi') + |u_1 - u_1'| \right);\\
|h_1(x_1, x_0, u_0, \Pi) - h_1(x_1', x_0', u_0', \Pi')| &\leq K \left( 1 + |x_1| + |x_1'| + |x_0| + |x_0'| + |u_0| + |u_0'| \right. \\
&+\left. M_2(\Pi) + M_2(\Pi') \right)\\
&\cdot \left( |x_1 - x_1'| + |x_0 - x_0'| + |u_0 - u_0'| + W_2(\Pi, \Pi') \right).
\end{align*}

\item (A.4) Differentiability:

The functions \( g_0 \), \( f_0 \), \( h_0 \), \( g_1 \), \( f_1 \) and \( h_1 \) are continuously differentiable in \( x_0 \in \mathbb{R}^{n_0} \), \( x_1 \in \mathbb{R}^{n_1} \), \( u_0 \in \mathbb{R}^{m_0} \), \( u_1 \in \mathbb{R}^{m_1} \) with bounded derivatives. We denote, for example, the derivative of \( g_0 \) with respect to \( x_0 \) by \( g_{0,x_0} \). They are also G\^{a}teaux differentiable in \( \Pi = m d\lambda \), for example, for \( m \in \mathcal{L}^2(\mathbb{R}^{n_1}) \):

\[
\frac{d}{d\theta} \bigg|_{\theta = 0} g_0(x_i, (m + \theta \tilde{m}) d\lambda, u_i) = \int_{\mathbb{R}^n} \frac{\partial g_0}{\partial m}(x_i, m d\lambda, u_i)(\xi) \tilde{m}(\xi) d\xi.
\]

\item (A.5):

The functions \( \sigma_0 \) (resp. \( \sigma_1 \)) are twice continuously differentiable in \( x_0 \) (resp. \( x_1 \)) with bounded first and second order derivatives.

\end{itemize}

\section{Preliminaries}

\subsection{Derivatives on the space of measures: Lions, linear, and Wasserstein gradient}

Let $\mathcal{P}_2(\mathbb{R}^d)$ be the space of probability measures with finite second moment. There are three equivalent notions (under regularity) of the derivative of functionals $f:\mathcal{P}_2(\mathbb{R}^d)\to\mathbb{R}$ that we will use as reference. We follow the notes of \cite{guo2022itoformula, korici2024lionsderivative}.

\paragraph{Lions derivative (via \emph{lift}).}
One identifies $\mu\in\mathcal{P}_2$ with a random variable $\vartheta\in L^2(\Omega;\mathbb{R}^d)$ such that $\mathbb{P}_\vartheta=\mu$, and \emph{lifts} $f$ to $\tilde f:L^2\to\mathbb{R}$ by $\tilde f(\vartheta)=f(\mathbb{P}_\vartheta)$. We say that $f$ is differentiable at $\mu_0$ if $\tilde f$ is Fr\'{e}chet differentiable at some $\vartheta_0$ with law $\mu_0$, and then there exists $h_0:\mathbb{R}^d\to\mathbb{R}^d$ such that $D\tilde f(\vartheta_0)=h_0(\vartheta_0)$. The \emph{Lions gradient} of $f$ at $\mu_0$ is the Borel version $(\partial_\mu f)(\mu_0,\cdot):=h_0(\cdot)$.

\paragraph{Linear derivative.}
The linear derivative $\frac{\delta f}{\delta\mu}(\mu,\cdot)$ is (defined up to an additive constant) the function that represents the variation along the geodesic segment $\mu_h=h\mu+(1-h)\mu'$, via
\[
f(\mu)-f(\mu')=\int_0^1\int_{\mathbb{R}^d}\frac{\delta f}{\delta\mu}(\mu_h,x)\,(\mu-\mu')(dx)\,dh.
\]
Under regularity, the two notions are related by
\[
\partial_\mu f(\mu,x)\;=\;\partial_x\Big(\tfrac{\delta f}{\delta\mu}(\mu,x)\Big).
\]

Recall that $W_2$ denotes the Wasserstein metric of order $2$ on $\mathcal{P}_2(\mathbb{R}^d)$, defined by
\[
W_2(\mu,\nu)^2 \;=\; \inf_{\pi \in \Gamma(\mu,\nu)} \int_{\mathbb{R}^d\times \mathbb{R}^d} |x-y|^2 \, d\pi(x,y),
\]
where $\Gamma(\mu,\nu)$ is the set of couplings of $\mu$ and $\nu$.

\paragraph{Wasserstein gradient (intrinsic).}
Another notion (from optimal transport) defines the differentiability of $f$ at $\mu$ by the existence of a vector $\xi\in T_\mu\mathcal{P}_2$ such that
\[
\frac{f(\mu_n)-f(\mu)-\int \xi(x)\,(y-x)\,\pi_n(dx,dy)}{W_2(\mu_n,\mu)}\;\to\;0
\]
for $\mu_n\to\mu$ in $W_2$ and $\pi_n$ optimal transport plans. This $\xi$ is denoted $\nabla_W f(\mu)$ and is equivalent to $\partial_\mu f(\mu,\cdot)$.

\begin{obs*}
We use the Lions derivative since it is especially convenient because:
\begin{enumerate}
    \item It is formulated in the Hilbert space $L^2$, which facilitates chain rules, BSDEs, and the Dynamic Programming Principle;
    \item It directly admits an It\^{o} formula over flows of laws;
    \item It is explicitly related to the other notions (linear and $\nabla_W$), so no generality is lost.
\end{enumerate}
\end{obs*}

\paragraph{Notation for joint state--control laws.}
In our extension we work on the product space with $\Pi_t\in\mathcal{P}_2(\mathbb{R}^{n_1}\times A)$ and disintegration $\Pi_t(dx,du)=m_t(dx)\,\Lambda_x(du)$. We define the \emph{joint Lions derivative} $D_\Pi f(\Pi)(x,u)$ as the Lions gradient of $f$ on the product space. To align with the rest of the work, we use the shorthand
\[
\big\langle D_\Pi f(\Pi),\delta_x\big\rangle \;:=\; \int_A D_\Pi f(\Pi)(x,u)\,\Lambda_x(du),
\]
which coincides with the projection onto the state when $f$ depends on $\Pi$ only through $m_t$. If moreover $f(\Pi)=F(m_t,q_t)$, the projections of $D_\Pi f$ recover the marginal derivatives $\partial_m F$ and the dependence on $q_t$. This convention simplifies the writing of the optimality conditions and the SHJB--FP systems.

\subsection{Fokker--Planck equation}

In general, when working with probability measures and comparing them, it is more convenient to do so when they possess density functions on $\mathbb{R}^{n_1}$. We define the second-order operator $A_1$ and its adjoint $A_1^*$ by
\begin{equation}
    \begin{aligned}
    A_1\Phi(x,t) &= -\mathrm{tr}(a_1(x)D^2\Phi(x,t)), \\
    A_1^*\Phi(x,t) &= -\sum_{i,j=1}^{n_1}\frac{\partial^2}{\partial x_i \partial x_j}\left(a_1^{ij}(x)\Phi(x,t)\right),
    \end{aligned}
\end{equation}
where $a_1(x)=\frac{1}{2}\sigma_1(x)\sigma_1(x)^*$ is a positive definite matrix.

Let $x_1(t)$ be the solution of the SDE of the representative agent under a control $u_1$, and suppose that the conditional density $p_{u_1}(\cdot,t)$ of $x_1^{u_1}(t)$ exists. Then $p_{u_1}$ satisfies the Fokker--Planck equation with dependence on the joint law $\Pi$,
\begin{equation}
    \label{Fokker-Planck}
    \begin{cases}
        \begin{aligned}
            \partial_t p_{u_1}(x,t) &= -A_1^* p_{u_1}(x,t)
            - \mathrm{div}\!\left(g_1\!\left(x,x_0(t),u_0(t), \Pi_t,u_1(x,t)\right)p_{u_1}(x,t)\right),\\[0.2cm]
            p_{u_1}(x,0) &= \omega(x),
        \end{aligned}
    \end{cases}
\end{equation}
where $\omega$ is the initial density of $\xi_1$. We further assume that $p_{u_1}(\cdot,t)\in L^2(\mathbb{R}^{n_1})$ and that $p_{u_1}(\cdot,t)\,d\lambda \in \mathcal{P}_2(\mathbb{R}^{n_1})$. For any density $m$ we write $m\,d\lambda= m$ when the context admits no ambiguity.

\medskip

\noindent
We can then rewrite the cost functional of the representative agent as

\begin{align*}
    J_1(u_1, x_0, u_0,\Pi)
    &= \mathbb{E}\!\left[ \int_0^T\!\int_{\mathbb{R}^{n_1}}
    f_1\!\left(x, x_0(t), u_0(t), \Pi_t, u_1(x,t)\right)\,p_{u_1}(x,t)\,dx\,dt\right.\\
    &\quad\left.+ \int_{\mathbb{R}^{n_1}}
    h_1\!\left(x, x_0(T), u_0(T), \Pi_T\right)\,p_{u_1}(x,T)\,dx \right].
\end{align*}

\section{Problems}

In this section we reformulate the optimization problems of the representative agent and the dominating player within the extended framework, in which the cost functions depend on the joint law of states and controls.

\subsection{Problem 1: Control of the Representative Agent}
\label{Problem1}
Given the process \(x_0\), the control $u_0$, and an exogenous flow of joint measures \(\Pi=(\Pi_t)_{t\in[0,T]}\), find a control \(u_1 \in \mathcal{A}_1\) that minimizes the cost functional
\begin{equation}
    \label{Problem 21}
    \begin{aligned}
    J_1(u_1; x_0, u_0, \Pi)
    := \mathbb{E}\left[\int_0^T f_1\big(x_1(t), x_0(t), u_0(t), \Pi_t, u_1(t)\big)\,dt
    + h_1\big(x_1(T),x_0(T), u_0(T), \Pi_T\big)\right].
    \end{aligned}
\end{equation}

\subsection{Problem 2: Equilibrium Condition}
\label{Problem2}
Let \(x_1^{u_1}\) be the dynamics induced by an optimal control \(u_1\) of \hyperref[Problem1]{Problem 1}. We denote by \(\mathcal{M}(\Pi)\) the joint law induced by the pair \((x_1^{u_1},u_1)\), conditioned on the filtration \(\mathcal{F}^0_t\). The equilibrium problem consists in finding a flow of measures \(\Pi\) such that the fixed-point property is satisfied:
\[
\mathcal{M}(\Pi)(t) = \Pi_t, \quad \forall\,t\in[0,T].
\]

\subsection{Problem 3: Control of the Dominating Player}
\label{Problem3}
Given the solution \(\Pi\) obtained in \hyperref[Problem2]{Problem 2}, find a control \(u_0\in\mathcal{A}_0\) that minimizes the cost functional
\begin{equation}
    \label{Problema 23}
    \begin{aligned}
    J_0(u_0;\Pi)
    := \mathbb{E}\left[\int_0^T f_0\big(x_0(t), \Pi_t, u_0(t)\big)\,dt
    + h_0\big(x_0(T),\Pi_T\big)\right].
    \end{aligned}
\end{equation}

\section{Main results}

\begin{lema}[Necessary condition for Problem 1]
    \label{lema:condicion_necesaria_problema1}
    Given \( x_0, u_0 \) and an exogenous flow of joint measures \(\Pi\) as in \hyperref[Problem1]{Problem 1}, the optimal control \( \hat{u}_1 \in \mathcal{A}_1 \) is optimal if and only if it satisfies the following SHJB equation:

\[
\left\{
\begin{aligned}
    -\partial_t \Psi &= \left( H_1(x, x_0(t), u_0(t), \Pi_t, D\Psi(x,t)) - A_1 \Psi(x,t) \right) dt - K_\Psi(x,t) dW_0(t), \\
    \Psi(x,T) &= h_1 \left( x, x_0(T), u_0(T), \Pi_T \right),
\end{aligned}
\right.
\]

where
\[
H_1(x, x_0, u_0, \Pi, q) = \inf_{u_1} \Big\{ f_1(x, x_0, u_0, \Pi, u_1) + q \, g_1(x, x_0, u_0, \Pi, u_1) \Big\}.
\]

The infimum is attained uniquely at \( \hat{u}_1 \), that is,
\[
H_1(x, x_0, u_0, \Pi, q) = f_1(x,x_0,u_0,\Pi,\hat{u}_1) + q\, g_1(x,x_0,u_0,\Pi,\hat{u}_1).
\]
\end{lema}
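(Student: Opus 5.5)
We mirror the argument of Bensoussan, Chau and Yam for the analogous lemma, exploiting the fact that in \hyperref[Problem1]{Problem 1} the flow $\Pi$, the dominating state $x_0$ and the control $u_0$ are \emph{exogenous}. They are $\mathbb{F}^0$-adapted inputs that do not react to the choice of $u_1$. Hence the joint-measure argument $\Pi_t$ enters $f_1,g_1,h_1$ only as a random, $\mathcal{F}^0_t$-adapted parameter, exactly like $x_0(t)$ and $u_0(t)$. No Lions derivative in $\Pi$ is needed at this stage; those derivatives only appear in the equilibrium and dominating-player problems. The proof therefore reduces to a stochastic control problem with random coefficients driven by $W^0$. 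We treat it by a verification argument, in the conditional (given $\mathcal{F}^0$) Fokker--Planck formulation.

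\textbf{Step 1: existence of $\Psi$.} Define the Hamiltonian $H_1$ as in the statement. We assume, as is implicit in the statement, that the map $u_1\mapsto f_1+q\,g_1$ has a unique minimizer $\hat u_1(x,x_0,u_0,\Pi,q)$, measurable in its arguments; this follows, for instance, from strict convexity and coercivity in $u_1$ together with (A.3)--(A.4). We then take the pair $(\Psi,K_\Psi)$ to be an $\mathbb{F}^0$-adapted solution of the backward SHJB equation with terminal condition $h_1(x,x_0(T),u_0(T),\Pi_T)$. Existence is taken in the sense of backward stochastic PDEs (Peng's theory), using (A.1)--(A.5) and the quadratic growth of $f_1,h_1$.

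\textbf{Step 2: the key identity.} For an arbitrary feedback $u_1\in\mathcal{A}_1$, let $p_{u_1}$ be the conditional density solving the Fokker--Planck equation \eqref{Fokker-Planck}, written with the frozen $\Pi$. Apply the Itô--Wentzell formula to $t\mapsto\int\Psi(x,t)p_{u_1}(x,t)\,dx$. The drift of $p_{u_1}$ is $\mathcal{F}^0$-measurable and contains no $dW_0$ term, since $W^1$ has been averaged out by conditioning. Consequently the only stochastic term is $\int K_\Psi p_{u_1}\,dx\,dW_0$, and its expectation vanishes. Integrating by parts, using that $A_1$ and $A_1^*$ are mutually adjoint, and moving the divergence term onto $D\Psi$, we obtain
\[
J_1(u_1;x_0,u_0,\Pi)=\mathbb{E}\!\int\Psi(x,0)\omega(x)\,dx+\mathbb{E}\!\int_0^T\!\!\int\Big[f_1(\cdot,u_1)+D\Psi\,g_1(\cdot,u_1)-H_1(\cdot,D\Psi)\Big]p_{u_1}\,dx\,dt .
\]
The bracket is nonnegative by the definition of $H_1$, and it vanishes identically when $u_1=\hat u_1(x,x_0(t),u_0(t),\Pi_t,D\Psi(x,t))$. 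This shows that the feedback $\hat u_1$ is optimal, with value $\mathbb{E}\int\Psi(x,0)\omega(x)\,dx$; this gives the sufficiency direction.

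\textbf{Step 3: necessity.} Conversely, suppose $\tilde u_1$ is optimal. Then its cost equals the value, so the nonnegative integrand in the identity of Step 2 must vanish $p_{\tilde u_1}\,dx\,dt\,d\mathbb{P}$-a.e. Uniqueness of the minimizer then forces $\tilde u_1=\hat u_1$ on the support of $p_{\tilde u_1}$. To conclude that $\tilde u_1=\hat u_1$ a.e., we use the nondegeneracy of $a_1$ (positive definite), which gives a positive density.

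\textbf{Main obstacle.} We expect the main difficulty to lie in justifying Step 2 rigorously. This requires enough regularity and integrability of $(\Psi,K_\Psi)$ and of $p_{u_1}$ to apply the Itô--Wentzell formula and to discard the boundary terms at infinity. The quadratic growth in (A.3) and the assumption $p_{u_1}(\cdot,t)\in L^2$, $p_{u_1}\,d\lambda\in\mathcal{P}_2$ are what one uses to control the products $\Psi p$ and $D\Psi\, g_1 p$. We would first try a standard localization: multiply by cutoff functions, pass to the limit by dominated convergence, and invoke the linear growth (A.2) for the moment bounds.
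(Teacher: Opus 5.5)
Your proof is correct at the paper's level of rigor, but it takes a different route. You give a verification argument, while the paper uses a stochastic maximum principle.

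\textbf{The paper's route.} It perturbs $\hat u_1\mapsto \hat u_1+\theta\tilde u_1$ and differentiates the Fokker--Planck equation to obtain the linearized density $\tilde p$, with $\tilde p(\cdot,0)=0$. It then defines $\Psi$ through the \emph{linear} backward equation with $f_1+D\Psi\,g_1$ evaluated at the candidate control. Pairing $\Psi$ with $\tilde p$ gives the first-order condition $f_{1,u_1}+D\Psi\,g_{1,u_1}=0$ a.e. Only at the end does it invoke the assumed unique minimizer to replace the linear equation by the SHJB.

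\textbf{Your route.} You take a solution of the nonlinear SHJB and pair it with $p_{u_1}$ itself. This gives the exact cost representation $J_1(u_1)=\mathbb{E}\int\Psi(x,0)\omega(x)\,dx+\mathbb{E}\int_0^T\!\int[f_1+D\Psi\,g_1-H_1]\,p_{u_1}\,dx\,dt$. The identity is correct: the $A_1$ terms cancel by duality, the divergence term produces $D\Psi\cdot g_1$, and there is no cross-variation because $p_{u_1}$ has no $dW_0$ part.

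\textbf{What each approach buys.} Your route gives both directions of the equivalence from one identity. It yields genuine global optimality rather than stationarity. It needs neither differentiability of $f_1,g_1$ in $u_1$ nor convexity. By contrast, the paper's variational argument really only proves necessity of a first-order condition. Its passage from $L_{u_1}=0$ to attainment of the infimum tacitly requires that stationary points be minimizers.

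The price of your route is two a priori assumptions:
\begin{enumerate}
\item well-posedness of the nonlinear backward SHJB;
\item admissibility of the feedback $\hat u_1(x,x_0(t),u_0(t),\Pi_t,D\Psi(x,t))$, i.e.\ well-posedness of the associated Fokker--Planck equation, which needs regularity of $D\Psi$.
\end{enumerate}
The paper's adjoint instead solves a linear equation built from an already given control. Its duality scheme, $\tilde p$ paired with an adjoint, is also exactly what is reused for Problem 3, where a verification argument would require a value function on the space of measures.

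\textbf{A small simplification of your Step 3.} Positivity of the density is not needed. Suppose $\tilde u_1=\hat u_1$ on $\{p_{\tilde u_1}>0\}$. Then $g_1(\cdot,\tilde u_1)\,p_{\tilde u_1}=g_1(\cdot,\hat u_1)\,p_{\tilde u_1}$, so $p_{\tilde u_1}$ solves the Fokker--Planck equation for $\hat u_1$. Uniqueness then gives $p_{\tilde u_1}=p_{\hat u_1}$, and this is all the cost depends on.
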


\begin{obs*}[Martingale process $K_\Psi$]
The process $K_\Psi(x,t)$ appearing in the SHJB equation is the coefficient of the martingale part in the Doob--Meyer decomposition of $\Psi$ with respect to the filtration $\mathcal{F}^0$ generated by the Brownian motion of the dominating player $W_0$. More precisely, the value function $\Psi(x,t)$ satisfies a BSDE of the form
$$-d\Psi(x,t) = F(x,t,D\Psi)\,dt - K_\Psi(x,t)\,dW_0(t),$$
where $K_\Psi$ is the square-integrable integrand that ensures the adaptedness of the solution. The existence of $K_\Psi$ is guaranteed by the martingale representation theorem in the Brownian filtration $\mathcal{F}^0$.
\end{obs*}

\begin{proof}
We apply the stochastic maximum principle. For any perturbation \(\tilde{u}_1 \in \mathcal{A}_1\),
\begin{equation}
\label{Condicion optimalidad}
    \begin{aligned}
    0 &= \left.\frac{d}{d\theta}\right|_{\theta=0} J_1(\hat{u}_1+\theta\tilde{u}_1,x_0,u_0,\Pi).
    \end{aligned}
\end{equation}
Rewriting the expectation in terms of the conditional density $p_{\hat{u}_1}(x,t)$ of the state $x_1$ and differentiating, we obtain
\begin{align*}
0 &= \mathbb{E}\Bigg[\int_0^T\int_{\mathbb{R}^{n_1}} \tilde{p}(x,t)\, f_1(x,x_0(t),u_0(t),\Pi_t,\hat{u}_1(x,t)) \, dx\,dt \\
&\quad + \int_0^T\int_{\mathbb{R}^{n_1}} p_{\hat{u}_1}(x,t)\, f_{1,u_1}(x,x_0(t),u_0(t),\Pi_t,\hat{u}_1(x,t)) \, \tilde{u}_1(x,t)\, dx\,dt \\
&\quad + \int_{\mathbb{R}^{n_1}} \tilde{p}(x,T)\, h_1(x,x_0(T),u_0(T),\Pi_T)\, dx \Bigg],
\end{align*}
where \(\tilde{p}=\left.\tfrac{d}{d\theta}\right|_{\theta=0} p_{\hat{u}_1+\theta\tilde{u}_1}\).

Differentiating with respect to $\theta$ in the Fokker--Planck equation \eqref{Fokker-Planck}, $\tilde{p}$ satisfies
    \begin{equation*}
        \begin{cases}
            \begin{aligned}
                \frac{\partial \tilde{p}}{\partial t} &= -A_1^*\tilde{p}(x,t)-\mathrm{div}\left(\tilde{u}_1(x,t)g_{1,u_1}\left(x,x_0(t),u_0(t),\Pi_t,\hat{u}_1(x,t)\right)p_{\hat{u}_1}(x,t)\right)\\
                &\quad - \mathrm{div}\left(g_1\left(x,x_0(t),u_0(t),\Pi_t,\hat{u}_1(x,t)\right)\tilde{p}(x,t)\right),\\
                \tilde{p}(x,0) &= 0.
            \end{aligned}
        \end{cases}
    \end{equation*}

We introduce the adjoint process $\Psi$ as the solution of the BSDE
    \[
        \left\{
        \begin{aligned}
            -\partial_t \Psi &= \Big( f_1(x, x_0(t), u_0(t), \Pi_t, u_1) +  D\Psi(x,t)\, g_1(x,x_0(t),u_0(t),\Pi_t,u_1) - A_1 \Psi(x,t) \Big)\, dt \\
            &\quad - K_\Psi(x,t)\, dW_0(t), \\
            \Psi(x,T) &= h_1 \left( x, x_0(T), u_0(T), \Pi_T \right).
        \end{aligned}
        \right.
    \]

Consider the inner product
    \begin{align*}
        & d\int_{\mathbb{R}^{n_1}}\tilde{p}(x,t)\Psi(x,t)dx \\
        &= \int_{\mathbb{R}^{n_1}} \left\{-A_1^*\tilde{p}(x,t)-\mathrm{div}\left[\tilde{u}_1(x,t)g_{1,u_1}(x,x_0(t),u_0(t),\Pi_t,\hat{u}_1(x,t))p_{\hat{u}_1}(x,t)\right]\right.\\
        &\hspace{2cm} \left. -\mathrm{div}\left[g_1(x,x_0(t),u_0(t),\Pi_t,\hat{u}_1(x,t))\tilde{p}(x,t)\right] \right\}\Psi(x,t)dxdt\\
        &\hspace{1cm} -\int_{\mathbb{R}^{n_1}} \tilde{p}(x,t)\left\{ f_1(x, x_0(t), u_0(t), \Pi_t, u_1(x,t)) \right.\\
        &\hspace{1cm}+\left. D\Psi(x,t)g_1(x,x_0(t),u_0(t),\Pi_t,u_1(x,t)) - A_1 \Psi(x,t) \right\} dxdt \\
        &\hspace{1cm} + \int_{\mathbb{R}^{n_1}} \tilde{p}(x,t)K_\Psi(x,t) dxdW_0(t)\\
        &=\int_{\mathbb{R}^{n_1}} \left( \tilde{u}_1(x,t)g_{1,u_1}(x,x_0(t),u_0(t),\Pi_t,\hat{u}_1(x,t))p_{\hat{u}_1}(x,t) \right)D\Psi(x,t)dxdt\\
        & -\int_{\mathbb{R}^{n_1}}\tilde{p}(x,t) f_1(x, x_0(t), u_0(t), \Pi_t, u_1(x,t)) dxdt+ \int_{\mathbb{R}^{n_1}} \tilde{p}(x,t)K_\Psi(x,t) dxdW_0(t).
    \end{align*}

Integrating over $[0,T]$ and taking expectations on both sides we get
    \begin{align*}
        & \mathbb{E}\left[\int_{\mathbb{R}^{n_1}}\tilde{p}(x,T)h_1(x,x_0(T),u_0(T),\Pi_T)dx\right] \\
        &=\mathbb{E}\left[\int_0^T\int_{\mathbb{R}^{n_1}}\left( \tilde{u}_1(x,t)g_{1,u_1}(x,x_0(t),u_0(t),\Pi_t,\hat{u}_1(x,t))p_{\hat{u}_1}(x,t) \right)D\Psi(x,t)dxdt \right.\\
        &\hspace{1cm} \left.-\int_0^T\int_{\mathbb{R}^{n_1}}\tilde{p}(x,t) f_1(x, x_0(t), u_0(t), \Pi_t, u_1(x,t)) dxdt\right],
    \end{align*}
where the term involving $K_\Psi$ vanishes upon taking expectations since it is a martingale term with zero expectation. Combining this with equation \eqref{Condicion optimalidad}, we obtain
    \begin{align*}
        0 &= \mathbb{E}\left[\int_0^T\int_{\mathbb{R}^{n_1}} \tilde{u}_1(x,t)\left[g_{1,u_1}(x,x_0(t),u_0(t),\Pi_t,\hat{u}_1(x,t))D\Psi(x,t) \right. \right. \\
        &+ \left. \left. f_{1,u_1}(x,x_0(t),u_0(t),\Pi_t,\hat{u}_1(x,t))\right] p_{\hat{u}_1}(x,t)dxdt\right].
    \end{align*}
Recall that $p_{\hat{u}_1}(\cdot,t)$ is a conditional probability density function and hence nonnegative, and $\tilde{u}_1$ is an arbitrary Markovian control. Therefore $\hat{u}_1$ is optimal only if
    $$g_{1,u_1}(x,x_0(t),u_0(t),\Pi_t,\hat{u}_1(x,t))D\Psi(x,t)+ f_{1,u_1}(x,x_0(t),u_0(t),\Pi_t,\hat{u}_1(x,t))=0,\quad a.e.\,(x,t).$$
This is equivalent to
    $$L_{u_1}(x,x_0(t),u_0(t),\Pi_t,\hat{u}_1(x,t), D\Psi(x,t))=0,\quad a.e.\,(x,t),$$
which provides a necessary condition for the minimization problem. Since the minimizer is assumed to be attained at $\hat{u}_1$, which depends on $x, x_0, u_0, \Pi$ and $D\Psi$, we obtain the SHJB equation.
\end{proof}

We replace the exogenous flow $\Pi$ by the mean field measure $\Pi^{x_0,u_0}$, identifying $m_{x_0,u_0}:=p_{\hat{u}_1}\,d\lambda$ with the conditional density of the optimal state of the representative agent given $\mathcal{F}_t^0$. Combining equations \eqref{Fokker-Planck} and \eqref{Condicion optimalidad} we obtain the following corollary.

\begin{cor}[Necessary condition for Problems 1 and 2]
\label{Condicion necesaria Problemas 1 y 2}
The control for the representative agent is optimal and the equilibrium condition is satisfied if and only if the coupled SHJB--FP system holds:

\[
\left\{
\begin{aligned}
    -\partial_t \Psi &= \Big( H_1(x, x_0(t), u_0(t), \Pi^{x_0,u_0}_t, D\Psi(x,t)) - A_1 \Psi(x,t) \Big) dt - K_\Psi(x,t) \, dW_0(t), \\
    \Psi(x,T) &= h_1 \left( x, x_0(T), u_0(T), \Pi^{x_0,u_0}_T \right), \\
    \partial_t m_{x_0,u_0} &= -A_1^* m_{x_0,u_0}(x,t) - \mathrm{div}\!\Big( G_1(x, x_0(t), u_0(t), \Pi^{x_0,u_0}_t, D\Psi(x,t))\, m_{x_0,u_0}(x,t)\Big), \\
    m_{x_0,u_0}(x,0) &= \omega(x),
\end{aligned}
\right.
\]

where:
\[
G_1(x,x_0,u_0,\Pi,q) = g_1\!\big(x, x_0, u_0,\Pi, \hat u_1(x, x_0, u_0, \Pi, q)\big),
\]
$\Pi^{x_0,u_0}$ denotes the state--control law and \(m_{x_0,u_0}(x,t)\) denotes the conditional density of \(x_1(t)\), given \(x_0,u_0\).

In particular, the passage from Problem 1 to Problem 2 is interpreted as:
\[
\Pi^{x_0,u_0}_t = \mathcal{L}\!\left( x_1(t), \hat u_1(x_1(t),x_0(t), u_0(t),\Pi,D\Psi)\,\big|\, \mathcal{F}_t^0 \right),
\]
that is, the joint measure induced by the optimal state and control of the representative agent conditioned on the information of the dominating player.
\end{cor}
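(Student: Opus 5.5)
The plan is to obtain the corollary by composing Lemma~\ref{lema:condicion_necesaria_problema1} with the fixed-point requirement of Problem~2.

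\emph{Step 1 (Problem 1 for a frozen flow).} Fix $x_0,u_0$ and an exogenous flow $\Pi$. Lemma~\ref{lema:condicion_necesaria_problema1} says that $\hat u_1$ is optimal iff the SHJB backward equation holds with Hamiltonian $H_1$ and terminal datum $h_1(x,x_0(T),u_0(T),\Pi_T)$. In that case the minimizer is a measurable feedback
\[
\hat u_1(x,t)=\hat u_1\bigl(x,x_0(t),u_0(t),\Pi_t,D\Psi(x,t)\bigr).
\]
Uniqueness of the minimizer in the definition of $H_1$ is assumed in the lemma. Substituting this feedback into the Fokker--Planck equation \eqref{Fokker-Planck} gives a density $p_{\hat u_1}$ satisfying
\[
\partial_t p=-A_1^*p-\mathrm{div}\bigl(G_1(x,x_0,u_0,\Pi_t,D\Psi)\,p\bigr),\qquad p(\cdot,0)=\omega .
\]
This equation is forward and random: $x_0,u_0,\Pi_t$ and $D\Psi$ are $\mathcal F^0_t$-adapted, and conditioning on $\mathcal F^0$ removes $W_1$. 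The conditional version of \eqref{Fokker-Planck} holds because $W_1$ is independent of $\mathcal F^0$.

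\emph{Step 2 (the fixed point).} The joint conditional law of $(x_1(t),\hat u_1(x_1(t),t))$ given $\mathcal F^0_t$ is the pushforward of $p_{\hat u_1}(x,t)\,dx$ under $x\mapsto(x,\hat u_1(x,t))$. So
\[
\mathcal M(\Pi)(t)=\bigl(\mathrm{id},\hat u_1(\cdot,x_0(t),u_0(t),\Pi_t,D\Psi(\cdot,t))\bigr)_{\#}\bigl(p_{\hat u_1}(\cdot,t)\,d\lambda\bigr).
\]
Problem~2 requires $\mathcal M(\Pi)=\Pi$. Writing $\Pi^{x_0,u_0}$ for such a fixed point and $m_{x_0,u_0}=p_{\hat u_1}$, the equilibrium flow is a Dirac-disintegrated measure, $\Lambda_x=\delta_{\hat u_1(x,\cdot)}$. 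Replacing $\Pi$ by $\Pi^{x_0,u_0}$ in both the SHJB equation and the FP equation yields the coupled system, together with the stated identification of $\Pi^{x_0,u_0}_t$ as a conditional law.

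\emph{Step 3 (both directions).} For ``only if'': if $\hat u_1$ is optimal for $\Pi^{x_0,u_0}$ and the equilibrium holds, Step 1 gives the SHJB equation, and the fixed point gives the FP equation with $m=p_{\hat u_1}$. For ``if'': given a solution $(\Psi,K_\Psi,m)$ of the system, define $\Pi$ by the pushforward formula above. The ``if'' part of the lemma, applied with that frozen $\Pi$, gives optimality of the feedback $\hat u_1$. Uniqueness of solutions of the linear FP equation for a given drift identifies $m$ with the conditional density of the controlled state; this needs the Lipschitz and growth assumptions (A.1)--(A.2) and nondegeneracy of $a_1$. Hence $\mathcal M(\Pi)=\Pi$.

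The main obstacle is the ``if'' direction. The maximum-principle argument in the lemma only yields first-order stationarity. Sufficiency needs either convexity of $u_1\mapsto f_1+q\,g_1$ or a verification argument. For the latter I would apply It\^o--Kunita/Wentzell to $\Psi(x_1(t),t)$ along an arbitrary admissible control; the fact that $\Psi$ attains the infimum in $H_1$ then shows $J_1(u_1)\ge \mathbb E\int\Psi(x,0)\omega(x)\,dx=J_1(\hat u_1)$. A second point to handle is measurability of the feedback $\hat u_1$ in $(x,t,\omega)$. The corollary asserts that a fixed point exists only implicitly, so I would treat existence as a hypothesis rather than prove it.
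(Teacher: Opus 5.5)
Your Steps 1--2 are the paper's own argument. The paper justifies the corollary only by the sentence before it: replace the frozen flow in Lemma~\ref{lema:condicion_necesaria_problema1} by the fixed point $\Pi^{x_0,u_0}$, set $m_{x_0,u_0}=p_{\hat u_1}$, and insert the optimal feedback into \eqref{Fokker-Planck}. Your Step 3 goes beyond the paper. You are right that the lemma's proof only yields $f_{1,u_1}+D\Psi\,g_{1,u_1}=0$. Your It\^o--Wentzell verification works without any convexity, because the cross-variation term vanishes: $x_1$ is driven only by $W_1$, and $\Psi$ only by $W_0$.

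The same defect also affects the ``only if'' direction that you take from the lemma. To turn the linear adjoint equation into the SHJB, $\hat u_1$ must \emph{minimize} $f_1+D\Psi\,g_1$, not merely be a critical point of it. That requires either convexity in $u_1$ or your verification argument run in reverse. If a regular SHJB solution exists and its argmin feedback is admissible, optimality of $\hat u_1$ forces $f_1+D\Psi\,g_1-H_1=0$ along the optimal trajectory, and uniqueness of the minimizer then identifies $\hat u_1$.

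The step that needs repair is the ``if'' direction. You cannot \emph{define} $\Pi$ from a solution $(\Psi,K_\Psi,m)$ by the pushforward formula, because $\hat u_1$ itself takes $\Pi_t$ as an argument. Moreover, $(\Psi,m)$ were computed with a flow $\Pi^{x_0,u_0}$ already inserted in $H_1$, $h_1$, $G_1$. What your argument actually needs is that this very flow satisfies
\[
\Pi^{x_0,u_0}_t=\bigl(\mathrm{id},\hat u_1(\cdot,x_0(t),u_0(t),\Pi^{x_0,u_0}_t,D\Psi(\cdot,t))\bigr)_{\#}\bigl(m_{x_0,u_0}(\cdot,t)\,d\lambda\bigr),\qquad t\in[0,T].
\]
The SHJB--FP pair does not imply this relation. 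FP determines $m$ once $\Pi$ is given, but nothing in the pair forces $\Pi_t$ to have first marginal $m_t$ and disintegration $\delta_{\hat u_1(x,\dots)}$. Since $\hat u_1$ depends on $\Pi_t$, including its control marginal, the relation is an implicit fixed-point equation in $\Pi_t$ at each time. Its solvability generally needs extra structure, e.g.\ a contraction or monotonicity condition in the control-law variable. In the classical case the coefficients depend on $m_t$ only, so the FP equation closes the loop by itself; with joint laws it does not.

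So read the corollary's last display as an equation of the system, and take $(\Psi,K_\Psi,m,\Pi)$ as the unknown subject to this constraint. Your chain then goes through: optimality against the frozen $\Pi$, FP uniqueness, hence $\mathcal M(\Pi)=\Pi$. Two technical conditions remain. The closed-loop SDE must be well-posed. FP uniqueness also needs regularity of the composed drift $x\mapsto G_1(x,\dots,D\Psi(x,t))$; (A.1)--(A.2) concern $g_1$, not this feedback.
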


\section{Extension of the results}

\begin{prop}[Necessary condition for Problem 3 in the extended case]
\label{prop:condicion_necesaria_problema3}
Let \(\Pi^{x_0,u_0}_t=\mathcal{L}\big(x_1(t),\hat u_1(t)\,\big|\,\mathcal{F}^0_t\big)\in\mathcal{P}_2(\mathbb{R}^{n_1}\times A)\) be the joint law induced by the representative agent (Problems 1--2). The control of the dominating player \(\hat u_0\in\mathcal{A}_0\) is optimal if and only if
\[
f_0\!\big(x_0,\Pi^{x_0,u_0}_t,\hat u_0(t)\big)+p(t)\cdot g_0\!\big(x_0,\Pi^{x_0,u_0}_t,\hat u_0(t)\big)=\inf_{u_0\in \mathcal{A}_0}\Big\{f_0(x_0,\Pi^{x_0,u_0}_t,u_0)+p(t)\cdot g_0(x_0,\Pi^{x_0,u_0}_t,u_0)\Big\}
\]
We set $H_0\!\big(x_0,\Pi^{x_0,u_0}_t,p(t)\big):=\inf_{u_0\in \mathcal{A}_0}\Big\{f_0(x_0,\Pi^{x_0,u_0}_t,u_0)+p(t)\cdot g_0(x_0,\Pi^{x_0,u_0}_t,u_0)\Big\}$. The adjoint process \(p\) and the adjoint fields \(q(\cdot,t),r(\cdot,t)\) satisfy, for \(t\in[0,T]\), the system

\[
\begin{aligned}
\label{Adj-p}
-dp(t)&=\bigg[
g_{0,x_0}\!\big(x_0(t),\Pi^{x_0,u_0}_t,\hat u_0(t)\big)\,p(t)
+f_{0,x_0}\!\big(x_0(t),\Pi^{x_0,u_0}_t,\hat u_0(t)\big) \\
&\qquad\ +\int_{\mathbb{R}^{n_1}} \! G_{1,x_0}\!\big(x,x_0(t),u_0(t),\Pi^{x_0,u_0}_t,D\Psi(x,t)\big)\cdot Dq(x,t)\, m_{x_0,u_0}(x,t)\,dx \\
&\qquad\ +\int_{\mathbb{R}^{n_1}} \! r(x,t)\, H_{1,x_0}\!\big(x,x_0(t),u_0(t),\Pi^{x_0,u_0}_t,D\Psi(x,t)\big)\,dx
\bigg]dt \\
&\qquad -\sum_{\ell=1}^{d_0} K_p^\ell(t)\,dW_0^\ell(t)
+\sum_{\ell=1}^{d_0}\sigma^{\ell_*}_{0,x_0}\!\big(x_0(t)\big)\,K_p^\ell(t)\,dt, \\[0.3em]
p(T)&=h_{0,x_0}\!\big(x_0(T),\Pi^{x_0,u_0}_T\big)+\int_{\mathbb{R}^{n_1}} r(x,T)\,h_{1,x_0}\!\big(x,x_0(T),u_0(T),\Pi^{x_0,u_0}_T\big)\,dx,
\end{aligned}
\tag{Adj-p}
\]

\[
\begin{aligned}
\label{Adj-q}
-\partial_t q(x,t)&=\Big[-A_1 q(x,t)
+p(t)\,\Big\langle D_{\Pi}g_0\!\big(x_0(t),\Pi^{x_0,u_0}_t,\hat u_0(t)\big)\,;\,\delta_x\Big\rangle \\
&\quad +Dq(x,t)\cdot G_1\!\big(x,x_0(t),u_0(t),\Pi^{x_0,u_0}_t,D\Psi(x,t)\big) \\
&\quad +\int_{\mathbb{R}^{n_1}\times A} Dq(\xi,t)\cdot
\Big\langle D_{\Pi}G_1\!\big(\xi,x_0(t),u_0(t),\Pi^{x_0,u_0}_t,D\Psi(\xi,t)\big)\,;\,\delta_x\Big\rangle \,\Pi^{x_0,u_0}_t(d\xi,du) \\
&\quad +\int_{\mathbb{R}^{n_1}\times A} r(\xi,t)\,
\Big\langle D_{\Pi}H_1\!\big(\xi,x_0(t),u_0(t),\Pi^{x_0,u_0}_t,D\Psi(\xi,t)\big)\,;\,\delta_x\Big\rangle \,\Pi^{x_0,u_0}_t(d\xi,du) \\
&\quad +\Big\langle D_{\Pi}f_0\!\big(x_0(t),\Pi^{x_0,u_0}_t,\hat u_0(t)\big)\,;\,\delta_x\Big\rangle \Big] dt
\ -\ K_q(x,t)\,dW_0(t), \\
q(x,T)&=\Big\langle D_{\Pi}h_0\!\big(x_0(T),\Pi^{x_0,u_0}_T\big)\,;\,\delta_x\Big\rangle
+\int_{\mathbb{R}^{n_1}\times A} r(\xi,T)\,
\Big\langle D_{\Pi}h_1\!\big(\xi,x_0(T),u_0(T),\Pi^{x_0,u_0}_T\big)\,;\,\delta_x\Big\rangle \,\Pi^{x_0,u_0}_T(d\xi,du),
\end{aligned}
\tag{Adj-q}
\]

\[
\begin{aligned}
\label{Adj-r}
\partial_t r(x,t)&=-A_1^* r(x,t)\ -\ \mathrm{div}_x\!\Big(
r(x,t)\,H_{1,q}\!\big(x,x_0(t),u_0(t),\Pi^{x_0,u_0}_t,D\Psi(x,t)\big) \\
&\quad +\ G_{1,q}\!\big(x,x_0(t),u_0(t),\Pi^{x_0,u_0}_t,D\Psi(x,t)\big)\,Dq(x,t)\,m_{x_0,u_0}(x,t)
\Big), \\
r(x,0)&=0.
\end{aligned}
\tag{Adj-r}
\]
\end{prop}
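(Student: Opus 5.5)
We treat Problem~3 as a stochastic control problem in which the dominating player's state is the triple $(x_0(t),\Psi(\cdot,t),m_{x_0,u_0}(\cdot,t))$. Here $\Psi$ and $m_{x_0,u_0}$ are tied to $(x_0,u_0)$ through the coupled SHJB--FP system of Corollary~\ref{Condicion necesaria Problemas 1 y 2}, and $\Pi^{x_0,u_0}_t$ is determined by $m_{x_0,u_0}$ and $\hat u_1(\cdot,x_0,u_0,\Pi,D\Psi)$. The plan mirrors the proof of Lemma~\ref{lema:condicion_necesaria_problema1}, but with three adjoint objects, one per state component: $p$ is dual to $x_0$, $q$ is dual to the density $m$ (which the Fokker--Planck equation moves forward), and $r$ is dual to $\Psi$ (which the SHJB equation moves backward). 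This explains why $r$ solves a forward equation with $r(x,0)=0$.

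\textbf{Step 1: first-order expansion of the cost.} Perturb $\hat u_0\mapsto \hat u_0+\theta\tilde u_0$. Differentiating at $\theta=0$ produces a linearized triple $(\tilde x_0,\tilde\Psi,\tilde m)$:
\begin{itemize}
\item $\tilde x_0$ solves the linearized SDE, with drift terms $g_{0,x_0}\tilde x_0+g_{0,u_0}\tilde u_0$, a term $\langle D_\Pi g_0;\delta_x\rangle$ paired with $\tilde m$, and a diffusion term $\sigma_{0,x_0}\tilde x_0\,dW_0$;
\item $\tilde\Psi$ solves the linearized backward SHJB equation, with $H_{1,q}\cdot D\tilde\Psi$, $H_{1,x_0}\tilde x_0$, $H_{1,u_0}\tilde u_0$ and the $D_\Pi H_1$ term acting on $\tilde m$, and terminal datum $h_{1,x_0}\tilde x_0+\dots$;
\item $\tilde m$ solves the linearized FP equation, with source $-\mathrm{div}\big((G_{1,x_0}\tilde x_0+G_{1,q}D\tilde\Psi+\langle D_\Pi G_1,\tilde m\rangle)m\big)$ and $\tilde m(\cdot,0)=0$.
\end{itemize}
The variation of $J_0$ is then $\mathbb{E}\int_0^T\big(f_{0,x_0}\tilde x_0+f_{0,u_0}\tilde u_0+\int\langle D_\Pi f_0;\delta_x\rangle\tilde m\,dx\big)dt$ plus the analogous terminal term in $h_0$. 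Throughout, the variation of $\Pi_t$ is expressed through $\tilde m$ and the induced variation of the control marginal, using the shorthand $\langle D_\Pi\,\cdot\,;\delta_x\rangle$ from the preliminaries.

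\textbf{Step 2: duality pairings.} Apply It\^o's formula to three pairings: $p(t)\cdot\tilde x_0(t)$ (the It\^o correction between $K_p$ and $\sigma_{0,x_0}\tilde x_0$ is the origin of the term $\sum_\ell\sigma^{\ell*}_{0,x_0}K_p^\ell\,dt$), $\int q\,\tilde m\,dx$, and $\int r\,\tilde\Psi\,dx$. Integrate by parts in $x$, moving $A_1^*$ onto $q$ as $A_1$ and moving $\mathrm{div}$ as $-D$. Then choose the drifts of $p,q,r$ so that every term containing $\tilde x_0,\tilde m,\tilde\Psi$ cancels against the cost variation:
\begin{itemize}
\item the coefficients of $\tilde x_0$ give (Adj-p), including $\int G_{1,x_0}\cdot Dq\,m$ and $\int r H_{1,x_0}$;
\item the coefficients of $\tilde m$ give (Adj-q);
\item the coefficients of $D\tilde\Psi$ give (Adj-r), after integrating by parts to produce $-\mathrm{div}(rH_{1,q}+G_{1,q}Dq\,m)$.
\end{itemize}
The terminal couplings fix $p(T)$ and $q(\cdot,T)$. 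Since $\tilde\Psi$ has a terminal condition while $r$ has the initial condition $r(\cdot,0)=0$, the boundary terms vanish. The stochastic integrals have zero expectation under square-integrability of the $K$'s.

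\textbf{Step 3: the optimality condition.} After the cancellations, the first variation reduces to $\mathbb{E}\int_0^T\big(f_{0,u_0}+g_{0,u_0}p+\text{($u_0$-terms through }r,q)\big)\tilde u_0\,dt=0$ for every admissible $\tilde u_0$. This is the stationarity condition of $u_0\mapsto f_0+p\cdot g_0$. Taking the infimum attained at $\hat u_0$, exactly as in Lemma~\ref{lema:condicion_necesaria_problema1}, identifies $\hat u_0$ through $H_0$. The converse direction (``if'') is asserted in the same spirit as in the Lemma. It would need a convexity assumption on the Hamiltonian, which I would state explicitly rather than prove.

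\textbf{Main obstacle.} I expect the hardest step to be the linearization of the measure argument. $\Pi_t$ depends on $\tilde m$ and also on $D\tilde\Psi$ through $\hat u_1$, so the variations of the control marginal must be routed correctly into the $r$ and $q$ equations. The same holds for the explicit $u_0$-dependence of $g_1,f_1,h_1$. I would first write $\Pi_t=(\mathrm{id},\hat u_1(\cdot,D\Psi))_\#m_t$ and apply the chain rule for the Lions derivative on the product space. The $D\Psi$-channel would then be absorbed into the $G_{1,q}$ and $H_{1,q}$ terms, at the cost of regularity assumptions that are left implicit, as in the statement.
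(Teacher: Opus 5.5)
Your overall architecture matches the paper's sketch: the G\^ateaux variation in $\tilde u_0$, the linearized triple $(\tilde x_0,\tilde m,\tilde\Psi)$, the three pairings $p^*\tilde x_0$, $\int q\,\tilde m\,dx$, $\int r\,\tilde\Psi\,dx$ with drifts chosen to cancel, and the It\^o cross-variation as the origin of $\sum_\ell\sigma^{\ell*}_{0,x_0}K_p^\ell\,dt$. The genuine gap is in Step~3. You keep the $u_0$-dependence of the agents' data: you put $H_{1,u_0}\tilde u_0$ into the linearized SHJB, although you omit the matching source $-\mathrm{div}(G_{1,u_0}\tilde u_0\,m)$ in the linearized FP. After the cancellations, the first variation is therefore $\mathbb{E}\int_0^T\big(f_{0,u_0}+p\cdot g_{0,u_0}+\int r\,H_{1,u_0}\,dx+\int G_{1,u_0}\cdot Dq\,m\,dx\big)\tilde u_0\,dt$, plus a terminal contribution from $h_{1,u_0}$. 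Setting this to zero is \emph{not} the stationarity condition of $u_0\mapsto f_0+p\cdot g_0$. It is the stationarity condition of the effective Hamiltonian in the Remark following the proposition. So the identification of $\hat u_0$ through $H_0=\inf\{f_0+p\cdot g_0\}$ does not follow from what you compute. The paper avoids this by stating at the start of its proof that it treats the case where the agents' coefficients do not depend on $u_0$. Then the $r$- and $q$-terms multiplying $\tilde u_0$ are absent and the stated condition follows. You need that restriction explicitly, or you must change the conclusion to the effective-Hamiltonian condition. Your caveat on the converse is apt: the paper also proves only necessity, and passes from stationarity to the infimum by assuming a unique minimizer.

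The route you propose for your ``main obstacle'' would not reach the stated system either. Suppose you write $\Pi_t=(\mathrm{id},\hat u_1(\cdot,D\Psi))_{\#}m_t$ and differentiate. Then each of $f_0,g_0,G_1,H_1,h_0,h_1$ picks up, besides its $\tilde m$-part, a term $\int\partial_u\frac{\delta F}{\delta\Pi}\big(x,\hat u_1(x)\big)\cdot\tilde{\hat u}_1(x)\,m(x)\,dx$. Here $\tilde{\hat u}_1=\hat u_{1,q}D\tilde\Psi+\hat u_{1,x_0}\tilde x_0+\cdots$, which includes a fixed-point contribution because $\hat u_1$ itself depends on $\Pi$. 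After integration by parts these give extra divergence sources in the $r$-equation, some nonlocal (e.g.\ those coming from $D_\Pi G_1$ and $D_\Pi H_1$), and extra terms in (Adj-p). They are not the terms $r\,H_{1,q}$ and $G_{1,q}Dq\,m$, which come from the $q$-dependence of the SHJB and FP coefficients. Nothing absorbs them, so your Step~2 claim that the $D\tilde\Psi$-coefficients give exactly (Adj-r) would fail. The paper never performs this chain rule. It pairs $D_\Pi F$ with $\tilde\Pi$ through the convention $\langle D_\Pi F;\delta_x\rangle=\int_A D_\Pi F(x,u)\,\Lambda_x(du)$ with $\Lambda_x$ held fixed, so the measure variation enters only through $\tilde m$. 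It then asserts in a closing Remark, rather than derives, that the $\tilde\Lambda$-variation is absorbed into $q$. To reproduce (Adj-p)--(Adj-r) you must adopt that same convention, which loses nothing when the data depend on $\Pi$ only through its state marginal. Carried out honestly, your chain rule yields a larger adjoint system than the one stated.
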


\begin{obs*}
In the adjoint equations, the derivative $G_{1,x_0}$ denotes the derivative of the drift evaluated at the optimal control:
$$G_{1,x_0} := \frac{\partial}{\partial x_0}g_1(\cdot, \hat{u}_1(\cdot)),$$
where $\hat{u}_1$ is determined from the SHJB equation of the representative agent. Analogously, $G_{1,q}$ denotes
$$G_{1,q} := \frac{\partial}{\partial q}g_1(\cdot, \hat{u}_1(\cdot)).$$
\end{obs*}

\begin{proof}[Sketch of proof]
The proof follows the scheme of Theorem 4.1 of Bensoussan, Chau and Yam \cite{bensoussan2014mfgdominating}, adapted to the dependence on the joint law $\Pi_t \in \mathcal{P}_2(\mathbb{R}^{n_1} \times A)$.

For clarity we first present the case in which the cost functions depend only on \(x_0\) (not on \(u_0\)); the extended case is discussed in the Remark at the end of this section.

\emph{(i) G\^{a}teaux variation.} We perturb \(u_0\mapsto \hat u_0+\theta\tilde u_0\) and differentiate in \(\theta\):
    \begin{equation}
        \label{Derivada Gateaux}
        \begin{aligned}
            0 &= \left.\frac{d}{d\theta}\right|_{\theta=0}J_0(\hat{u}_0+\theta\tilde{u}_0) \\
            &= \mathbb{E}\left\{ \int_0^T \left[f_{0,x_0}(x_0(t),\Pi^{x_0,u_0}_t,\hat{u}_0(t))\tilde{x}_0(t)+\int_{\mathbb{R}^{n_1}\times A} \Big\langle D_\Pi f_0(x_0(t),\Pi^{x_0,u_0}_t,\hat{u}_0(t));\delta_\xi\Big\rangle\,d\tilde\Pi_t(\xi,u) \right.\right.\\
            &\left.+ f_{0,u_0}(x_0(t),\Pi^{x_0,u_0}_t,\hat{u}_0(t))\tilde{u}_0(t)\right]dt \\
            &\left.+ h_{0,x_0}(x_0(T),\Pi^{x_0,u_0}_T)\tilde{x}_0(T)+\int_{\mathbb{R}^{n_1}\times A} \Big\langle D_\Pi h_0(x_0(T),\Pi^{x_0,u_0}_T);\delta_\xi\Big\rangle\,d\tilde\Pi_T(\xi,u) \right\},
        \end{aligned}
    \end{equation}
where $\tilde{x}_0=\left.\frac{d}{d\theta}\right|_{\theta=0}x_0(\hat{u}_0+\theta\tilde{u}_0)$; $\tilde{m}_{x_0,u_0}=\left.\frac{d}{d\theta}\right|_{\theta=0}m_{x_0,u_0}(\hat{u}_0+\theta\tilde{u}_0)$; $\tilde{\Psi}=\left.\frac{d}{d\theta}\right|_{\theta=0}\Psi(\hat{u}_0+\theta\tilde{u}_0)$.

\emph{(ii) Linearized equations.} The variations satisfy
    \begin{align*}
        d\tilde{x}_0 &= \left[ g_{0,x_0}(x_0(t), \Pi^{x_0,u_0}_t, \hat{u}_0(t)) \tilde{x}_0(t) + \int_{\mathbb{R}^{n_1}\times A} \Big\langle D_\Pi g_0(x_0(t), \Pi^{x_0,u_0}_t, \hat{u}_0(t));\delta_\xi\Big\rangle\,d\tilde\Pi_t(\xi,u) \right.\\
        & + \left. g_{0,u_0}(x_0(t), \Pi^{x_0,u_0}_t, \hat{u}_0(t)) \tilde{u}_0(t)\right]dt + \sum_{l=1}^{d_0} \sigma^l_{0,x_0}\tilde{x}_0(t)dW_0^l(t); \\
        \tilde{x}_0(0) &= 0; \\
        \frac{\partial \tilde{m}_{x_0,u_0}}{\partial t} &= -A_1^* \tilde{m}_{x_0,u_0}(x,t) - \mathrm{div}\left\{\left[ G_{1,x_0}(x, x_0(t), u_0(t), \Pi^{x_0,u_0}_t, D\Psi(x,t))\tilde{x}_0(t) \right. \right. \\
        &+ \int_{\mathbb{R}^{n_1}\times A} \Big\langle D_\Pi G_1(x,x_0(t),u_0(t),\Pi^{x_0,u_0}_t,D\Psi(x,t));\delta_\xi\Big\rangle\,d\tilde\Pi_t(\xi,u) \\
        & \left. + G_{1,q}(x,x_0(t),u_0(t),\Pi^{x_0,u_0}_t,D\Psi(x,t))D\tilde{\Psi}(x,t)\right]m_{x_0,u_0}(x,t)\\
        & \left. + G_1(x,x_0(t),u_0(t),\Pi^{x_0,u_0}_t,D\Psi(x,t))\tilde{m}_{x_0,u_0}(x,t)\right\}, \\
        \tilde{m}_{x_0,u_0}(x,0) &= 0; \\
        -\partial_t \tilde{\Psi} &= \left[ H_{1,x_0}(x, x_0(t), u_0(t), \Pi^{x_0,u_0}_t, D\Psi(x,t))\tilde{x}_0(t) \right. \\
        &+ \int_{\mathbb{R}^{n_1}\times A} \Big\langle D_\Pi H_1(x,x_0(t),u_0(t),\Pi^{x_0,u_0}_t,D\Psi(x,t));\delta_\xi\Big\rangle\,d\tilde\Pi_t(\xi,u) \\
        & \left. + H_{1,q}(x,x_0(t),u_0(t),\Pi^{x_0,u_0}_t,D\Psi(x,t))D\tilde{\Psi}(x,t)-A_1\tilde{\Psi}(x,t)\right]dt-K_{\tilde{\Psi}}(x,t)dW_0(t),\\
        \tilde{\Psi}(x,T) &= h_{1,x_0}(x, x_0(T),u_0(T),\Pi^{x_0,u_0}_T)\tilde{x}_0(T)+\int_{\mathbb{R}^{n_1}\times A} \Big\langle D_\Pi h_1(x,x_0(T),u_0(T),\Pi^{x_0,u_0}_T);\delta_\xi\Big\rangle\,d\tilde\Pi_T(\xi,u).
    \end{align*}

\emph{(iii) Adjoint coupling.} We introduce the adjoint processes $p(t)$, $q(x,t)$ and $r(x,t)$ as in the Proposition and consider the dual products
\[
d\big(p^* \tilde x_0\big),\qquad d\!\int q(x,t)\,\tilde m_{x_0,u_0}(x,t)\,dx,\qquad d\!\int r(x,t)\,\tilde\Psi(x,t)\,dx.
\]
Integrating by parts and using that $\Pi^{x_0,u_0}_t$ appears only through $D_\Pi f_0, D_\Pi g_0, D_\Pi H_1, D_\Pi G_1$, all terms in $\tilde x_0, \tilde m_{x_0,u_0}, \tilde\Psi$ cancel by the choice of $(p,q,r)$ solving \eqref{Adj-p}--\eqref{Adj-q}--\eqref{Adj-r}. Summing the three differentials and taking expectations we obtain
\begin{align*}
    & d\{p^*\tilde{x}_0+\int q(x,t)\tilde{m}_{x_0,u_0}(x,t)dx-\int r(x,t)\tilde{\Psi}(x,t)dx\} \\
    &= \left[-\tilde{x}_0^* f_{0,x_0}(x_0(t), \Pi^{x_0,u_0}_t, \hat{u}_0(t))+p^*(t)g_{0,u_0}(x_0(t), \Pi^{x_0,u_0}_t, \hat{u}_0(t)) \tilde{u}_0(t) \right.\\
    &\quad - \left.\int_{\mathbb{R}^{n_1}\times A} \Big\langle D_\Pi f_0(x_0(t),\Pi^{x_0,u_0}_t,\hat{u}_0(t));\delta_x\Big\rangle\,d\tilde\Pi_t(x,u)\right]dt+\{\cdots\}dW_0(t).
\end{align*}
Integrating and taking expectations on each side, and considering \eqref{Derivada Gateaux}, we observe that the terms with $r$ cancel each other, leaving
$$0=\mathbb{E}\int_0^T\left\{f_{0,u_0}(x_0,\Pi^{x_0,u_0}_t,\hat{u}_0)+p^*g_{0,u_0}(x_0,\Pi^{x_0,u_0}_t,\hat{u}_0)\right\}\tilde{u}_0\,dt.$$
Since $\tilde{u}_0$ is arbitrary, the control is optimal for the dominating player only if
$$f_{0,u_0}(x_0,\Pi^{x_0,u_0}_t,\hat{u}_0)+p^*g_{0,u_0}(x_0,\Pi^{x_0,u_0}_t,\hat{u}_0)=0,\quad a.e.\,t.$$
Since we are assuming that a unique minimizer $\hat{u}_0$ exists, we conclude that $\hat{u}_0$ satisfies the infimum condition
$$f_0(x_0,\Pi^{x_0,u_0}_t,\hat{u}_0)+p\cdot g_0(x_0,\Pi^{x_0,u_0}_t,\hat{u}_0)=\inf_{u_0}\left\{f_0(x_0,\Pi^{x_0,u_0}_t,u_0)+p\cdot g_0(x_0,\Pi^{x_0,u_0}_t,u_0)\right\}.$$

The rigorous verification that the Lions derivative $D_\Pi$ and the It\^{o} formula over flows of measures extend to the product space $\mathcal{P}_2(\mathbb{R}^{n_1} \times A)$ under hypotheses (A.1)--(A.5) is detailed in \cite{munoz2025complexmarkets}.
\end{proof}

\begin{obs*}
In this extended version, the adjoint processes retain their fundamental role, but their interpretation is broadened:

\begin{itemize}
    \item \bm{$p(t)$}: continues to be the costate of the dominating player, capturing the sensitivity of the functional with respect to variations in its state and control $x_0, u_0$.

    \item \bm{$q(x,t)$}: now encodes the sensitivity with respect to the extended measure $\Pi^{x_0,u_0}_t$. In practice, this means that $q$ captures both the variations with respect to the state distribution and those induced indirectly by the law of the controls, given by the marginals of $\Pi$. In this sense, $q$ absorbs the Lions derivatives with respect to both marginals and acts as the true multiplier for the consistency constraint on the population dynamics.

    \item \bm{$r(x,t)$}: as in the classical case, it remains the multiplier associated with the backward equation of the agents (the SHJB). In the extended version, its role is to ensure that the variations in $\Psi$ (which now depend on $\Pi$ and not only on $m_t$) are correctly transmitted to the system of necessary conditions.
\end{itemize}

Thus, the processes $(p,q,r)$ form a dual system that reflects the tripartite structure of the constraints: dynamics of the dominating player, evolution of the population, and consistency of the individual value. The key difference with respect to the classical case is that the sensitivity captured by $q$ unifies in a single object the dependencies on the state and on the control through the measure $\Pi^{x_0,u_0}_t$.
\end{obs*}

\begin{obs*}
When the functions $f_1,g_1$ also depend on the control of the dominating player $u_0$,
the optimality condition for the control becomes
\begin{align*}
    f_{0,u_0}(x_0,\Pi^{x_0,u_0}_t,u_0)&+p\cdot g_{0,u_0}(x_0,\Pi^{x_0,u_0}_t,u_0) \\
    &+\int_{\mathbb{R}^{n_1}} r(x,t)\,H_{1,u_0}\big(x,x_0(t),u_0(t),\Pi^{x_0,u_0}_t,D\Psi(x,t)\big)\,dx\\
    &+\int_{\mathbb{R}^{n_1}} G_{1,u_0}\big(x,x_0(t),u_0(t),\Pi^{x_0,u_0}_t,D\Psi(x,t)\big)Dq(x,t)\,m_{x_0,u_0}(x,t)\,dx=0,\quad a.e.\,t.
\end{align*}
This means that the first-order condition for $u_0$ is not obtained from the classical Hamiltonian $H_0$ but from an effective Hamiltonian that incorporates the reaction of the representative agents:
\begin{align*}
    H_0(x_0(t),\Pi^{x_0,u_0}_t;\, p(t),q(\cdot,t),r(\cdot,t))
    &:=\inf_{u_0\in \mathcal{A}_0} \left\{f_0(x_0,\Pi^{x_0,u_0}_t,u_0)+p\cdot g_0(x_0,\Pi^{x_0,u_0}_t,u_0)\right. \\
    &+\int_{\mathbb{R}^{n_1}} r(x,t)\,H_1\big(x,x_0(t),u_0(t),\Pi^{x_0,u_0}_t,D\Psi(x,t)\big)\,dx\\
    &\left.+\int_{\mathbb{R}^{n_1}} Dq(x,t)\,G_1\big(x,x_0(t),u_0(t),\Pi^{x_0,u_0}_t,D\Psi(x,t)\big)\,m_{x_0,u_0}(x,t)\,dx\right\}.
\end{align*}
The control of the dominating player $\hat{u}_0\in\mathcal{A}_0$ is then optimal if and only if
\[
0=\partial_{u_0}H_0(x_0(t),\Pi^{x_0,u_0}_t,\hat u_0(t);\,
p(t),q(\cdot,t),r(\cdot,t)),
\qquad \text{a.e. } t\in[0,T].
\]
Here:
\begin{itemize}
    \item $p$ is the adjoint process associated with the state of the dominating player $x_0$,
    \item $q$ and $r$ are the adjoint processes capturing the sensitivity of the representative agents' system,
    \item the terms with $H_{1,u_0}$ and $G_{1,u_0}$ describe the marginal impact of $u_0$ on the equilibrium of the representative agent (cost and dynamics respectively).
\end{itemize}
\end{obs*}

\begin{obs*}
In our extended formulation the cost functional depends on the joint measure
\(\Pi^{x_0,u_0}_t\) of the pair \((x,u)\). It might seem natural that, when computing the G\^{a}teaux derivative in the direction of a variation of the control, a variational process \(\tilde{\Pi}\) should appear describing the evolution of the perturbed measure. However, this does not occur, for two fundamental reasons:

\begin{enumerate}
    \item The Lions derivative with respect to \(\Pi\) already incorporates automatically the sensitivities in both coordinates: the state \(m_t\) and the induced distribution of controls \(\Lambda_{x_0}\). Thus, what would conceptually correspond to a term in \(\tilde{\Pi}\) decomposes into variations of \(\tilde{m}\) and \(\tilde{\Lambda}\), which are precisely those that appear in the computations.

    \item From the dynamical point of view, the control has no diffusion of its own: it is determined as a feedback from the trajectory of \(x_0\) and the common information. Consequently, in the evolution of \(\Pi\) factored as
    \[
        \Pi_t(dx,du) \;=\; m_t(dx)\,\Lambda_{x}(du),
    \]
    \(m_t\) is the only measure satisfying a genuine Fokker--Planck equation. The part associated with the control is obtained directly from the equilibrium policy without requiring an independent dynamics.
\end{enumerate}

In summary, the variation of the functional under perturbations of the control is expressed solely in terms of \(\tilde{m}\) and \(\tilde{\Lambda}\), which are absorbed upon introducing the adjoint process $q$, and the only Fokker--Planck equation that appears is that of \(m_t\). This explains why the computations retain the same structure as in the classical case.
\end{obs*}

Let $G_0(x_0,\Pi,p):= g_0(x_0,\Pi,\hat{u}_0(x_0,\Pi,p))$. We conclude with the main result of this work.

\begin{teo}[Extended necessary condition for Problems 1, 2 and 3]
\label{teo:CNO_extendida}
The system formed by the representative agent and the dominating player admits as necessary optimality condition the following coupled system of stochastic partial differential equations:

\[
\left\{
\begin{aligned}
    dx_0 &= G_0\big(x_0(t),\Pi^{x_0,u_0}_t, p(t)\big)\, dt \;+\; \sigma_0(x_0(t))\, dW_0(t), \\
    x_0(0)&=\xi_0;\\[0.5cm]
    \frac{\partial m_{x_0,u_0}}{\partial t} &= -A_1^* m_{x_0,u_0}(t)\;-\; \mathrm{div}\Big( G_1\big(x, x_0,u_0,\Pi^{x_0,u_0}_t, D\Psi(x,t)\big)\, m_{x_0,u_0}(t)\Big),\\
    m_{x_0,u_0}(x,0)&=\omega(x); \\[0.5cm]
    -\partial_t \Psi &= \Big( H_1\big(x, x_0(t),u_0(t),\Pi^{x_0,u_0}_t, D\Psi(x,t)\big) - A_1 \Psi(x,t) \Big)\,dt - K_\Psi(x,t)\, dW_0(t), \\
    \Psi(x,T) &= h_1\big(x, x_0(T),u_0(T),\Pi^{x_0,u_0}_T\big).
\end{aligned}
\right.
\]
\[
\left\{
\begin{aligned}
    -dp(t)&=\bigg[
    G_{0,x_0}\!\big(x_0(t),\Pi^{x_0,u_0}_t,\hat u_0(t)\big)\,p(t)
    +f_{0,x_0}\!\big(x_0(t),\Pi^{x_0,u_0}_t,\hat u_0(t)\big) \\
    &\qquad\ +\int_{\mathbb{R}^{n_1}} \! G_{1,x_0}\!\big(x,x_0(t),u_0(t),\Pi^{x_0,u_0}_t,D\Psi(x,t)\big)\cdot Dq(x,t)\, m_{x_0,u_0}(x,t)\,dx \\
    &\qquad\ +\int_{\mathbb{R}^{n_1}} \! r(x,t)\, H_{1,x_0}\!\big(x,x_0(t),u_0(t),\Pi^{x_0,u_0}_t,D\Psi(x,t)\big)\,dx
    \bigg]dt \\
    &\qquad -\sum_{\ell=1}^{d_0} K_p^\ell(t)\,dW_0^\ell(t)
    +\sum_{\ell=1}^{d_0}\sigma^{\ell_*}_{0,x_0}\!\big(x_0(t)\big)\,K_p^\ell(t)\,dt, \\[0.3em]
    p(T)&=h_{0,x_0}\!\big(x_0(T),\Pi^{x_0,u_0}_T\big)+\int_{\mathbb{R}^{n_1}} r(x,T)\,h_{1,x_0}\!\big(x,x_0(T),u_0(T),\Pi^{x_0,u_0}_T\big)\,dx; \\[0.5cm]
    -\partial_t q&=\Big[-A_1 q(x,t)
    +p(t)\,\Big\langle D_{\Pi}G_0\!\big(x_0(t),\Pi^{x_0,u_0}_t,\hat u_0(t)\big)\,;\,\delta_x\Big\rangle \\
    &\quad +Dq(x,t)\cdot G_1\!\big(x,x_0(t),u_0(t),\Pi^{x_0,u_0}_t,D\Psi(x,t)\big) \\
    &\quad +\int_{\mathbb{R}^{n_1}\times A} Dq(\xi,t)\cdot
    \Big\langle D_{\Pi}G_1\!\big(\xi,x_0(t),u_0(t),\Pi^{x_0,u_0}_t,D\Psi(\xi,t)\big)\,;\,\delta_x\Big\rangle \,\Pi^{x_0,u_0}_t(d\xi,du) \\
    &\quad +\int_{\mathbb{R}^{n_1}\times A} r(\xi,t)\,
    \Big\langle D_{\Pi}H_1\!\big(\xi,x_0(t),u_0(t),\Pi^{x_0,u_0}_t,D\Psi(\xi,t)\big)\,;\,\delta_x\Big\rangle \,\Pi^{x_0,u_0}_t(d\xi,du) \\
    &\quad +\Big\langle D_{\Pi}f_0\!\big(x_0(t),\Pi^{x_0,u_0}_t,\hat u_0(t)\big)\,;\,\delta_x\Big\rangle \Big] dt
    \ -\ K_q(x,t)\,dW_0(t), \\
    q(x,T)&=\Big\langle D_{\Pi}h_0\!\big(x_0(T),\Pi^{x_0,u_0}_T\big)\,;\,\delta_x\Big\rangle
    +\int_{\mathbb{R}^{n_1}\times A} r(\xi,T)\,
    \Big\langle D_{\Pi}h_1\!\big(\xi,x_0(T),u_0(T),\Pi^{x_0,u_0}_T\big)\,;\,\delta_x\Big\rangle \,\Pi^{x_0,u_0}_T(d\xi,du);\\[0.5cm]
    \partial_t r&=-A_1^* r(x,t)\ -\ \mathrm{div}_x\!\Big(
    r(x,t)\,H_{1,q}\!\big(x,x_0(t),u_0(t),\Pi^{x_0,u_0}_t,D\Psi(x,t)\big) \\
    &\qquad\qquad\qquad\quad +\ G_{1,q}\!\big(x,x_0(t),u_0(t),\Pi^{x_0,u_0}_t,D\Psi(x,t)\big)\,Dq(x,t)\,m_{x_0,u_0}(x,t)
    \Big), \\
    r(x,0)&=0.
\end{aligned}
\right.
\]
\end{teo}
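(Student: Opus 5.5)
The theorem is essentially an assembly of the earlier results, so I would prove it by combining Corollary~\ref{Condicion necesaria Problemas 1 y 2} with Proposition~\ref{prop:condicion_necesaria_problema3}, and then substituting the feedback form of the dominating player's optimal control.

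First, fix the dominating player's pair $(x_0,u_0)$. By Lemma~\ref{lema:condicion_necesaria_problema1}, optimality of $\hat u_1$ in Problem~1 forces the SHJB equation for $\Psi$ with Hamiltonian $H_1$ and the pointwise minimizer $\hat u_1(x,x_0,u_0,\Pi,D\Psi)$. Imposing the fixed point of Problem~2, $\mathcal{M}(\Pi)=\Pi$, identifies $\Pi_t$ with $\Pi^{x_0,u_0}_t=\mathcal{L}(x_1(t),\hat u_1\mid\mathcal{F}^0_t)$. The state marginal of this law is $m_{x_0,u_0}$, which solves the Fokker--Planck equation \eqref{Fokker-Planck} with drift $G_1$. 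This yields the second and third blocks of the forward--backward system; they are exactly the content of Corollary~\ref{Condicion necesaria Problemas 1 y 2}.

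Second, treat this SHJB--FP system as the state constraint in the dominating player's Problem~3. Proposition~\ref{prop:condicion_necesaria_problema3} supplies three things. It gives the pointwise minimization $\hat u_0=\hat u_0(x_0,\Pi^{x_0,u_0}_t,p)$ of $f_0+p\cdot g_0$. It gives the adjoint triple $(p,q,r)$ solving \eqref{Adj-p}--\eqref{Adj-r}. It also shows that the duality pairing $d\{p^*\tilde x_0+\int q\tilde m-\int r\tilde\Psi\}$ cancels every variation except the $\tilde u_0$ term.

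Third, substitute the feedback $\hat u_0(x_0,\Pi,p)$ into the dynamics of $x_0$. This replaces $g_0$ by $G_0(x_0,\Pi,p)=g_0(x_0,\Pi,\hat u_0(x_0,\Pi,p))$, which gives the first line of the system.

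The remaining step is to rewrite the coefficients of \eqref{Adj-p} and \eqref{Adj-q} in terms of $G_0$. I would justify the replacements $g_{0,x_0}\to G_{0,x_0}$ and $D_\Pi g_0\to D_\Pi G_0$ by an envelope argument. Differentiating $G_0$ through $\hat u_0$ produces the extra terms $g_{0,u_0}\,\partial_{x_0}\hat u_0$ and $g_{0,u_0}\,D_\Pi\hat u_0$. Each of these is paired with $p$. The first-order condition $f_{0,u_0}+p^*g_{0,u_0}=0$ shows that these contributions combine into derivatives of the minimized Hamiltonian $H_0$, in which the $\hat u_0$-dependence drops out by stationarity. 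The one subtlety is that the theorem keeps $f_{0,x_0}$ and $D_\Pi f_0$ rather than their $\hat u_0$-composed versions. With that convention the $G_0$ versions and the $g_0$ versions agree modulo terms that vanish at the optimum.

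If the cost and drift also depend on $u_0$, the first-order condition is the effective one from the Remark after the Proposition. I would then state the identification as holding for the effective Hamiltonian, with $\hat u_0$ now also depending on $(q,r)$.

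I expect the main obstacle to be the rigorous justification of the duality computation on $\mathcal{P}_2(\mathbb{R}^{n_1}\times A)$. Several things need to be checked:
\begin{itemize}
\item the variation $\tilde\Pi$ must be expressed through $\tilde m$ and the variation of the feedback $\hat u_1$, as in the Remark on $\tilde\Pi$;
\item the pairing $\langle D_\Pi\cdot;\delta_x\rangle$ must absorb the control-coordinate sensitivity into $q$;
\item the stochastic integrals against $W_0$ must be true martingales.
\end{itemize}
For the last point I would work under (A.1)--(A.5), using square-integrability of $K_\Psi$, $K_q$ and $K_p$, and invoke the product-space Itô formula referenced in \cite{munoz2025complexmarkets}. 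The ``if and only if'' direction requires convexity, which is not assumed. I would therefore present the result as a necessary condition only, as the theorem states.
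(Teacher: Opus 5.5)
Your assembly route matches the paper's. The theorem has no separate proof there: right after defining $G_0(x_0,\Pi,p)=g_0(x_0,\Pi,\hat u_0(x_0,\Pi,p))$, it is stated as the conjunction of the Corollary for Problems 1--2 and Proposition~\ref{prop:condicion_necesaria_problema3}, with the feedback inserted into the $x_0$ dynamics.

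\textbf{The envelope paragraph is wrong.} Suppose $G_{0,x_0}$ and $D_\Pi G_0$ are total derivatives of the composed map. Then
\[
p^*G_{0,x_0}+f_{0,x_0}
= p^*g_{0,x_0}+f_{0,x_0}+p^*g_{0,u_0}\,\partial_{x_0}\hat u_0
= p^*g_{0,x_0}+f_{0,x_0}-f_{0,u_0}\,\partial_{x_0}\hat u_0 ,
\]
and the last term does not vanish at the optimum.
\begin{itemize}
\item Stationarity only kills the full combination $(f_{0,u_0}+p^*g_{0,u_0})\,\partial_{x_0}\hat u_0$. That requires \emph{both} $f_0$ and $g_0$ to be composed with $\hat u_0$, and it is the genuine envelope identity $\partial_{x_0}H_0=f_{0,x_0}+p^*g_{0,x_0}$ at $\hat u_0$.
\item The theorem keeps $f_{0,x_0}$ and $D_\Pi f_0$ uncomposed, so your mixed expression is not a derivative of $H_0$.
\item Concrete check (scalar, no $\Pi$-dependence): take $g_0=u_0$ and $f_0=\tfrac12|u_0-x_0|^2$. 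Then $\hat u_0=x_0-p$, so the mixed coefficient is $2p$, while (Adj-p) has $p$.
\item The $q$ equation picks up the analogous residual $-f_{0,u_0}\,\langle D_\Pi\hat u_0;\delta_x\rangle$.
\end{itemize}
Under your reading, the identification with the Proposition would be false, not merely unjustified.

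\textbf{Fix: drop that paragraph.} The theorem writes the coefficients as $G_{0,x_0}(x_0(t),\Pi^{x_0,u_0}_t,\hat u_0(t))$ and $D_\Pi G_0(x_0(t),\Pi^{x_0,u_0}_t,\hat u_0(t))$. The Remark after it calls $G_{0,x_0}$ the derivative of the drift evaluated at the optimal control. The only reading under which the theorem follows from the Proposition is $g_{0,x_0}$ and $D_\Pi g_0$ with the control frozen at $\hat u_0(t)$.

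This is exactly what the duality argument produces. The control $u_0$ is perturbed as an open-loop process $\hat u_0+\theta\tilde u_0$, so $\tilde x_0$ is driven by $g_{0,x_0}\tilde x_0+g_{0,u_0}\tilde u_0+\cdots$, and no $\partial_{x_0}\hat u_0$ or $D_\Pi\hat u_0$ ever appears. The feedback is substituted only afterwards, in the forward equation, to turn $g_0$ into $G_0$. With this reading the $p$ and $q$ equations are literally (Adj-p) and (Adj-q), and the theorem is pure assembly.

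Contrast this with $G_{1,x_0}$, $G_{1,q}$ and $D_\Pi G_1$. These must be total derivatives through $\hat u_1$, because they come from linearizing the Fokker--Planck drift $G_1$, which has the feedback built in.
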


\begin{obs*}
In the adjoint equations, the derivative $G_{0,x_0}$ denotes the derivative of the drift evaluated at the optimal control:
$$G_{0,x_0} := \frac{\partial}{\partial x_0}g_0(x_0, \Pi, \hat{u}_0(x_0, \Pi, p)),$$
where $\hat{u}_0$ is the solution of $\partial_{u_0} H_0 = 0$.
\end{obs*}

\begin{obs*}
The system of equations obtained constitutes the extended version of the classical adjoint SHJB--FP scheme. Indeed, it combines:

\begin{itemize}
    \item The dynamics of the dominating state \(x_0\), governed by a control \(u_0\) that interacts with the joint law of states and controls \(\Pi^{x_0,u_0}=(m_{x_0,u_0}, \Lambda_{x_0,u_0})\).
    \item The Fokker--Planck equation for the density of the representative agent, which describes the aggregate evolution of the population in terms of the conditioned measure \(m_{x_0,u_0}\).
    \item The stochastic Hamilton--Jacobi--Bellman equation for the representative agent, whose dependence on \(\Pi\) explicitly reflects the presence of the law of the states and of the controls in the cost function.
    \item The adjoint processes \(p(t), q(x,t), r(x,t)\) associated with the problem of the dominating player, which encode the derivatives of \(f_0,h_0\) with respect to \(x_0\), the state measure, and the law of the controls.
\end{itemize}

This coupled system simultaneously synthesizes the optimality conditions for the representative agent (Problem 1), the fixed-point equilibrium condition (Problem 2), and the optimality condition for the dominating player (Problem 3). In analytic terms, it is a Forward--Backward Stochastic PDE system that generalizes previous approaches to Mean Field Games, explicitly incorporating the dependence on the extended law \(\Pi\).
\end{obs*}

\section{Conclusion}

In this work we extended the theory of mean field games with a dominating player of Bensoussan, Chau and Yam \cite{bensoussan2014mfgdominating} to the case where the cost functionals and dynamics depend on the joint state--control law $\Pi_t = \mathcal{L}(X_t^1, U_t^1 \mid \mathcal{F}_t^0) \in \mathcal{P}_2(\mathbb{R}^{n_1} \times A)$, rather than solely on the marginal distribution of states $\mu_t$.

The main contributions are threefold.

\paragraph{Reformulation in terms of $\Pi_t$.}
The optimization problems of the representative agent (Problems 1 and 2) and of the dominating player (Problem 3) were reformulated in terms of the joint measure $\Pi_t$. This unifies the dependencies on state and control into a single measure on the product space $\mathbb{R}^{n_1} \times A$, simplifying both the notation and the application of differential calculus on probability spaces.

\paragraph{Extension of the Lions derivative.}
The Lions derivative extends naturally to the space $\mathcal{P}_2(\mathbb{R}^{n_1} \times A)$ when $A \subset \mathbb{R}^m$ is compact, leveraging the product space structure. This extension is essential for formulating the optimality conditions, as it allows one to compute derivatives of functionals with respect to the measure $\Pi$ and decompose them into contributions from the state and control marginals.

\paragraph{Coupled SHJB--FP--adjoint system.}
Proposition~\ref{prop:condicion_necesaria_problema3} establishes the necessary optimality conditions for the dominating player through a system of three adjoint processes $(p, q, r)$, and Theorem~\ref{teo:CNO_extendida} synthesizes the equilibrium conditions of the full system into a coupled Forward--Backward stochastic--partial differential equation system. This result generalizes Theorem~4.1 of \cite{bensoussan2014mfgdominating} to the new measure space. The dependence on $\Pi$ (rather than on $\mu$ alone) modifies the adjoint equation for $q(x,t)$, which now captures simultaneously the sensitivities with respect to both state and control through the joint measure.

\paragraph{Future directions.}
The extension presented here opens several lines of investigation. First, the question of existence and uniqueness of solutions for the extended SHJB--FP--adjoint system requires deeper analysis, particularly regarding the regularity of the Lions derivative on the product space. Second, the application of this framework to concrete models---such as decentralized markets where a liquidity provider acts as a dominating player facing a continuum of traders \cite{munoz2025complexmarkets}---constitutes a natural motivation and a test of the generality of the theory. Finally, the extension to games where multiple dominating players interact with each other and with the field of minor agents is an open problem of both theoretical and applied interest.


\bibliographystyle{plain}
\bibliography{main}

@article{carmona2015weakmfgformulation,
  title={A probabilistic weak formulation of mean field games and applications},
  author={R. Carmona and D. Lacker},
  journal={The Annals of Applied Probability},
  volume={25},
  number={3},
  pages={1189-1231},
  year={2015}
}

@article{bensoussan2014mfgdominating,
  title={Mean Field Games with a Dominating Player},
  author={A. Bensoussan M.H.M. Chau and S.C.P. Yam},
  journal={},
  volume={},
  number={},
  pages={},
  year={2014},
  publisher={}
}

@misc{guo2022itoformula,
  title={Ito's formula for flow of measures on semimartingales},
  author={X. Guo and C. Liang},
  year={2022},
  eprint={2208.01572},
  archivePrefix={arXiv}
}

@misc{korici2024lionsderivative,
  title={Lions derivative and its applications},
  author={S. Korici},
  year={2024},
  note={Lecture notes}
}

@article{munoz2025complexmarkets,
  title={Mean Field Games in Complex Markets: Extensions and Applications},
  author={A. Muñoz Gonzalez},
  journal={},
  volume={},
  number={},
  pages={},
  year={2025},
  publisher={}
}
\end{document}